\theoremstyle{definition}
\newtheorem{theorem}{Theorem}[section]
\newtheorem{corollary}[theorem]{Corollary}
\newtheorem{example}[theorem]{Example}
\newtheorem{lemma}[theorem]{Lemma}
\newtheorem{proposition}[theorem]{Proposition}
\newtheorem{remark}[theorem]{Remark}
\newtheorem{definition}[theorem]{Definition}
\theoremstyle{plain}
\newcommand{\R}{\mathbb{R}}
\newcommand{\Z}{\mathbb{Z}}
\newcommand{\N}{\mathbb{N}}
\newcommand{\del}{\partial}
\DeclareMathOperator{\id}{id}
\DeclareMathOperator{\supp}{supp}
\DeclareMathOperator{\fix}{fix}
\DeclareMathOperator{\PL}{PL}
\DeclareMathOperator{\PSL}{PSL}
\DeclareMathOperator{\GL}{GL}
\DeclareMathOperator{\Homeo}{Homeo}
\DeclareMathOperator{\Diff}{Diff}
\DeclareMathOperator{\interior}{int}
\title{$\PL(M)$ admits no Polish group topology}
\author{Kathryn Mann}
\date{}
\begin{document}

\maketitle

\vspace{-1cm}
\begin{abstract}  
We show that the group of piecewise linear homeomorphisms of any compact PL manifold does not admit a Polish group topology.  This uses \emph{a)} new results on the relationship between topologies on groups of homeomorphisms, their algebraic structure, and the topology of the underlying manifold, and \emph{b)} new results on the structure of certain subgroups of $\PL(M)$.    The proof also shows that the group of piecewise projective homeomorphisms of $S^1$ has no Polish topology.  
\end{abstract}

\section{Introduction}

Many transformation groups exhibit remarkable links between their algebraic structure and topology.  In the case of groups of homeomorphisms of manifolds, there is an additional rich interplay between algebraic structure and group topology and the topology of the underlying manifold.  For example, Kallman used this perspective to show that many ``big" groups of homeomorphisms, such as the full homeomorphism group or diffeomorphism group of a manifold, admit a unique Polish (separable and completely metrizable) group topology \cite{Kallman}.   Other instances of this algebraic--topological relationship can be seen in the main results of \cite{Filipkiewicz}, \cite{Mann ETDS}, \cite{Hurtado}, and \cite{Mann aut cont}.  

Here, we study the group $\PL(M)$ of piecewise linear homeomorphisms of a manifold.  Recall that an orientation preserving homeomorphism $f$ of the $n$-cube $I^n$ is \emph{piecewise linear} if there exists a subdivision of $I^n$ into finitely many linear simplices so that the restriction of $f$ to each simplex is an affine linear homeomorphism onto its image.   A manifold $M$ has a $PL$ structure if it is locally modeled on $\left(I^n, \PL(I^n) \right)$, in which case $\PL(M)$ is the automorphism group of this structure.   This group is interesting for many reasons, including its algebraic structure (c.f. \cite{BS}, \cite{CR}) and its relationship with the Thompson groups.    As for topology, there has been sigificant historical interest in how best to topologize $\PL(M)$.  For instance, in \cite[problems 39--40]{Lashof}, three different topologies are proposed by Milnor, Stasheff, and Wall, and the choice of most appropriate topology appears to be unresolved.   

We ask if the group $\PL(M)$ admits a Polish group topology.   This question is inherently interesting from the perspective of descriptive set theory (see e.g. \cite{Rosendal} and references therein) but also interesting from the perspective of transformation groups.  Much like the group of diffeomorphisms of a manifold, $\PL(M)$ is not complete in (and arguably, not best described by) the compact-open topology inherited from $\Homeo(M)$.  However $\Diff(M)$ does have a different topology -- the standard $C^\infty$ topology --  that makes it a Polish group, and we ask whether $\PL(M)$ might as well.  
  
In \cite{CK}, Cohen and Kallman showed that $\PL(I)$ and $\PL(S^1)$ have no Polish group structure.  However, their proof uses 1-dimensionality in an essential way.  Here we follow a different strategy, giving an independent proof of the following stronger result.

\begin{theorem} \label{main thm}
Let $M$ be a $\PL$ manifold.  Then $\PL(M)$ does not admit a Polish group topology.   
\end{theorem}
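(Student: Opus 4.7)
The plan is a proof by contradiction. Suppose $\tau$ is a Polish group topology on $G := \PL(M)$; my strategy is to realize $G$ as a countable increasing union of proper $\tau$-closed subgroups, so that the Baire category theorem forces one of them to have nonempty interior, contradicting an index count.

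\emph{Step 1: Algebraic subgroups are $\tau$-closed.} I would first argue that certain algebraically defined subgroups of $\PL(M)$ are automatically $\tau$-closed. The principal examples are the support subgroups $G_K := \{f \in \PL(M) : \supp(f) \subseteq K\}$ for a compact PL subpolyhedron $K \subset M$, and pointwise fixator subgroups $\fix(S)$ for closed $S \subset M$. The tool is a commutation characterization of support: $f$ lies in $G_K$ if and only if $f$ commutes with every element of $\PL(M)$ supported off a neighborhood of $K$. As an intersection of centralizers, such a subgroup is closed in any Hausdorff group topology, in particular in $\tau$. This is where the first ingredient promised by the abstract enters --- the interaction between an abstract group topology, the algebraic structure of $\PL(M)$, and the underlying PL topology of $M$.

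\emph{Step 2: Exhaustion by proper subgroups.} Fix a PL triangulation of $M$ and a compact exhaustion $K_1 \subset K_2 \subset \cdots$ of $\interior(M)$. For $M$ closed (the main case), $G = \bigcup_n G_{K_n}$, and for manifolds with boundary one augments by a countable family of translates or by combining with fixator subgroups $\fix(S_n)$ for an increasing sequence of finite subsets $S_n$ of a countable dense set. Each term of the exhaustion is a proper subgroup of $G$. The structural results on subgroups of $\PL(M)$ --- the second ingredient advertised in the abstract --- are then invoked to show each $G_{K_n}$ has \emph{uncountable} index in $G$, by producing uncountably many distinct cosets via PL homeomorphisms that translate $K_n$ to pairwise distinct subpolyhedra.

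\emph{Step 3: Baire contradiction.} Having written $G$ as a countable union of proper $\tau$-closed subgroups, Baire category applied to $(G, \tau)$ yields some $n$ with $G_{K_n}$ of nonempty $\tau$-interior. Translating, $G_{K_n}$ contains a $\tau$-neighborhood of the identity and so is $\tau$-open. Every Polish group is Lindel\"of, so any open subgroup has at most countably many cosets, contradicting the uncountable-index bound from Step 2. The step I expect to be the main obstacle is Step 1: giving a purely algebraic characterization of support-defined subgroups that survives every Polish group topology, without presupposing any continuity of the natural action on $M$. The commutation description above requires one to recognize ``elements supported in a small set'' in purely group-theoretic terms, and it is here that the new structural results on subgroups of $\PL(M)$ --- presumably analogues of the Brin--Squier style algebraic lemmas in higher dimensions --- must do the heavy lifting, both to justify the commutation trick and to guarantee that enough elements with arbitrarily small support exist to make the centralizer description nontrivial.
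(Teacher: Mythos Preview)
Your Step~1 is on the right track and aligns with the paper's use of Kallman's lemma (showing the sets $C(U,V)$ are closed in any Hausdorff group topology). The genuine gap is Step~2: the proposed exhaustion of $G = \PL(M)$ by the support subgroups $G_{K_n}$ simply does not exist.

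If $M$ is closed, then $\interior(M) = M$ is compact, any compact exhaustion stabilizes at $K_n = M$, and $G_{K_n} = G$ is not proper. If $M$ has nonempty boundary (or is noncompact), then $\PL(M)$ contains homeomorphisms that move $\partial M$ (respectively, have noncompact support), and no such element lies in any $G_{K_n}$ with $K_n \subset \interior(M)$ compact; hence $G \neq \bigcup_n G_{K_n}$. Your suggested augmentation does not repair this: a countable family of fixators $\fix(S_n)$ still fails to cover $G$ (plenty of PL homeomorphisms fix no point of a given countable set), and translates of $G_{K_n}$ are cosets, not subgroups, so the open-subgroup/index argument of Step~3 no longer applies to them. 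More generally, there is no evident countable increasing chain of proper subgroups covering $\PL(M)$; stratifying by combinatorial complexity (number of linear pieces, etc.) yields subsets that are not closed under composition.

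The paper's argument is entirely different and does not attempt any exhaustion. It first reduces from $M$ to the cube by passing to the closed subgroup of elements supported in a linearly embedded $I^n$, and then to the closed subgroup $\PL(I^n, \mathcal{F})$ of homeomorphisms preserving each leaf of the vertical-line foliation. This subgroup satisfies the local perturbations property, so a Baire argument (Theorem~\ref{generic prop}) shows that in any Polish topology the generic pair would generate a free subgroup; but an extension of the Brin--Squier lemma (Proposition~\ref{no free prop}) shows $\PL(I^n, \mathcal{F})$ contains \emph{no} free subgroup. The contradiction is a tension between ``generically free'' and ``never free,'' not an index count.
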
 

\noindent In fact, we will show that $\PL(M)$ is \emph{very far} from being a Polish group.   In the case of $M = I$, this distinction is  easy to summarize:  while $\PL(I)$ contains no free subgroups (see \cite{BS} or Lemma \ref{BS lem} below), we will show that for many groups of homeomorphisms, including $\PL(I)$, the generic pair of elements with respect to \emph{any} Polish group topology generate a free subgroup.   
Precisely, we prove the following.  

\begin{theorem}  \label{generic prop}
Let $G \subset \Homeo(M)$ be a group satisfying a ``local perturbations property" (c.f. definition \ref{pert def} below).  If $\tau$ is any Polish group topology on $G$, then the generic pair $(f,g)$ in $G\times G$ with the product topology generate a free subgroup.  
\end{theorem}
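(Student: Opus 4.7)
The natural plan is to apply the Baire category theorem to $(G \times G, \tau \times \tau)$, which is again Polish. Enumerate the non-trivial reduced words $w_1, w_2, \ldots$ in the free group $F_2 = \langle a, b \rangle$, and for each $i$ set
\[ N_i := \{(f,g) \in G \times G : w_i(f,g) \neq \id\}. \]
A pair $(f,g)$ generates a free subgroup exactly when $(f,g) \in \bigcap_i N_i$, so by Baire category it is enough to show that each $N_i$ is open and dense in $G \times G$.

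Openness is immediate: the word map $(f,g) \mapsto w_i(f,g)$ is continuous in any topological-group topology on $G$, and $\{\id\}$ is closed in the Hausdorff space $(G, \tau)$, so $N_i$ is the preimage of an open set.

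For density, I fix a pair $(f_0, g_0)$ with $w_i(f_0, g_0) = \id$ together with a basic $\tau\times\tau$-neighborhood $U \times V$ of it, and look for $(f,g) \in U \times V$ with $w_i(f,g) \neq \id$. The geometric plan is to choose a point $p \in M$ whose forward orbit $p_0 = p,\, p_1, \ldots, p_n$ under the successive letters of $w_i$ (applied via $(f_0, g_0)$) consists of pairwise distinct points, pick an index $j$, and then invoke the local perturbations property of Definition \ref{pert def} to replace the relevant generator by its composition with a homeomorphism supported in a small ball around $p_j$ that moves $p_j$ nontrivially. If the support is chosen disjoint from the other orbit points, the modification alters the orbit only at step $j$, so the perturbed word sends $p$ to a point different from $p$, i.e.\ $(f,g) \in N_i$. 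The density then intersects nicely with Baire's theorem, and any pair in the resulting dense $G_\delta$ generates a free subgroup.

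The main obstacle is precisely the last compatibility step: in an abstract Polish topology $\tau$ on $G$, a $\tau$-neighborhood of the identity need not contain any homeomorphism of small $C^0$-support, so a priori nothing guarantees that perturbations localized near a prescribed point of $M$ are available. Bridging $\tau$ and the ambient topology of $M$ is exactly the role of Definition \ref{pert def}, and the crux of the proof is reducing each density claim to a single, controlled application of that property. A secondary technical issue is the degenerate case in which no choice of $p$ yields a pairwise distinct orbit -- for instance, if $f_0$ and $g_0$ have very restricted dynamics or if $w_i$ forces many orbit collisions -- which I would handle by first applying the local perturbations property to $(f_0, g_0)$ itself, moving to a nearby pair whose dynamics are rich enough to separate some finite orbit before executing the main argument.
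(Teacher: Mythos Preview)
Your Baire-category framework is exactly the paper's, and you correctly identify that the crux is producing $\tau$-small perturbations with prescribed small support. But your main geometric step has a gap: if $(f_0,g_0)$ already satisfies $w_i(f_0,g_0)=\id$, then for \emph{every} $p\in M$ the orbit closes up, $p_n=p_0$, so there is \emph{no} point whose orbit under the letters of $w_i$ is pairwise distinct. What you call a ``secondary technical issue'' is thus the only case, and your proposed fix---move to a nearby pair with ``richer dynamics''---does not obviously help, since any nearby pair in $U\times V$ might still satisfy $w_i=\id$ (that is precisely what you are trying to rule out).

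The paper resolves this by making the perturbation argument \emph{iterative} rather than one-shot. Start from an arbitrary $y_0$, let $m$ be the \emph{first} index at which the partial orbit $y_0,\dots,y_m$ has a repetition, and modify the $m$-th letter by a $\tau$-small perturbation supported in a tiny ball around $y_{m-1}$ (chosen disjoint from the earlier orbit points, and using that $w$ is reduced to ensure the perturbation does not disturb earlier steps). The new pair still lies in the given neighborhood and now has collision-free partial orbit through step $m$; repeating at most $|w_i|$ times yields a pair in $U\times V$ with $w_i(f,g)(y_0)\neq y_0$. The analytic input you need for each step is not Definition~\ref{pert def} alone but its consequence Theorem~\ref{pert prop}: separability of $(G,\tau)$ plus the local perturbations property forces every $\tau$-neighborhood of $\id$ to contain a homeomorphism supported in a prescribed ball and moving a prescribed point. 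That lemma is the bridge between $\tau$ and the topology of $M$ that your outline gestures at but does not supply.
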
 

Loosely speaking, the \emph{local perturbations property} is a statement that there exist many homeomorphisms supported on small sets and close to the identity in $G$.  While $\PL(M)$ contains many homeomorphisms supported on small sets, the application of Theorem \ref{generic prop} is not completely straightforward, as these need not \emph{a priori} be close to the identity in any Polish topology on $\PL(M)$.  However, we show that this is indeed the case, by proving a very general result on topologies on groups of homeomorphisms (Theorem \ref{pert prop}) illustrating again the rich relationship between the topology of $M$ and topologies of groups of homeomorphisms of $M$.

The situation is more complicated for higher dimensional manifolds, since there are many examples of free subgroups in $\PL(M)$ as soon as $M$ has dimension at least 2.  (See Section \ref{free sec}.)   However, we are able produce a proof very much in the same spirit as the $\PL(I)$ case.  In essence, the proof consists of describing a natural subgroup of $\PL(M)$, showing that this subgroup necessarily inherits any Polish group structure from $\PL(M)$, and finally that the subgroup is both large enough to have the local perturbations property and small enough to contain no free subgroup.  

Our strategy also applies to other transformation groups, such as the group of \emph{piecewise projective} homeomorphisms of $S^1$ discussed in \cite{Monod}.   In Section \ref{cor sec} we show the following.  

\begin{corollary} \label{projective cor}
The group of piecewise projective homeomorphisms of $S^1$ admits no Polish topology.  
\end{corollary}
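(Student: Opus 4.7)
The plan is to mirror the three-step strategy employed for Theorem \ref{main thm}. Let $G$ denote the group of piecewise projective homeomorphisms of $S^1 = \R P^1$. I would identify a subgroup $H \subset G$ with the following three properties:
\emph{(i)} any Polish group topology on $G$ restricts to a Polish group topology on $H$;
\emph{(ii)} $H$ satisfies the local perturbations property of Definition \ref{pert def}; and
\emph{(iii)} $H$ contains no free subgroup.
Given \emph{(i)}--\emph{(iii)}, if $\tau$ were a Polish group topology on $G$, then by \emph{(i)} the subgroup $H$ would inherit a Polish topology, and by \emph{(ii)} and Theorem \ref{generic prop} the generic pair in $H \times H$ would generate a free subgroup, directly contradicting \emph{(iii)}.

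The natural candidate for $H$ exploits the fact that the affine transformations $x \mapsto ax+b$ are the elements of $\PSL(2,\R)$ fixing $\infty \in \R P^1$, so they are in particular projective. Fix a closed arc $J \subset S^1$ whose interior lies in an affine chart $\R \subset \R P^1$, and let $H$ be the group of piecewise linear (i.e.\ piecewise affine) homeomorphisms of $S^1$ supported in $\interior(J)$, extended by the identity elsewhere. Then every element of $H$ is piecewise projective, so $H \subset G$. Property \emph{(iii)} for $H \cong \PL_c(\interior(J))$ is immediate from the Brin--Squier theorem (Lemma \ref{BS lem}), and \emph{(ii)} is standard because one can build arbitrarily small PL bumps on small subarcs of $J$ and remain inside $H$.

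The main obstacle is \emph{(i)}, and it should yield to the same mechanism that delivers the corresponding step in the proof of Theorem \ref{main thm}, namely the general structural result Theorem \ref{pert prop}. The point is that Theorem \ref{pert prop} is formulated in terms of the underlying topology of the manifold $S^1$ and local perturbations, not in terms of the particular ambient group; so the argument showing that a Polish topology on $\PL(S^1)$ restricts to a Polish topology on a PL subgroup supported in $J$ should go through identically for a Polish topology on $G$ restricting to $H$. Verifying that the hypotheses of Theorem \ref{pert prop} are satisfied by the inclusion $H \subset G \subset \Homeo(S^1)$, rather than only by the PL situation used in Theorem \ref{main thm}, is the step that will require the most care; once it is in place, the three properties combine exactly as in the main argument to yield the corollary.
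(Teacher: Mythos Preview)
Your three-step outline is exactly the right architecture, and it is what the paper does. The gap is in your choice of $H$ and, relatedly, in the tool you invoke for step \emph{(i)}.

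First, the tool: closedness of subgroups in an unknown Hausdorff group topology is delivered by Kallman's Lemma~\ref{Kal lemma} (the sets $C(U,V)$ are closed), not by Theorem~\ref{pert prop}. Theorem~\ref{pert prop} is only used inside the proof of Theorem~\ref{generic prop} to produce small perturbations; it says nothing about subgroups being closed.

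Second, and more seriously: your $H$ consists of the piecewise \emph{linear} homeomorphisms supported in $\interior(J)$. This makes \emph{(iii)} immediate from Lemma~\ref{BS lem}, but it breaks \emph{(i)}. The only closed sets Lemma~\ref{Kal lemma} hands you are intersections of sets $C(U,V) = \{f : f(\overline{U}) \subseteq \overline{V}\}$, i.e.\ conditions on where $f$ sends subsets of $S^1$. Such conditions can cut out ``supported in $\interior(J)$'' (that is how the paper handles the analogous step for $\PL(M)$), but they cannot see the difference between a piecewise affine map and a genuinely piecewise projective map with the same support. Since there \emph{are} non-affine piecewise projective homeomorphisms supported in $\interior(J)$, your $H$ is a proper subgroup of the obvious closed subgroup, and there is no evident reason it should itself be closed in an arbitrary Polish topology on $G$.

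The paper's remedy is to take $H$ to be \emph{all} piecewise projective homeomorphisms pointwise fixing a small arc $I$; then $H = \bigcap_{U \subset I \text{ open}} C(U,U)$ is closed by Lemma~\ref{Kal lemma}, and \emph{(ii)} is clear. The cost is that \emph{(iii)} now requires a Brin--Squier statement for piecewise projective (not just PL) homeomorphisms of an interval. But the proof of Lemma~\ref{BS lem} goes through essentially verbatim in the piecewise projective setting (alternatively one cites \cite[Theorem~14]{Monod}): any two-generated subgroup is metabelian or contains $\Z^\infty$, hence is not free. With this adjustment your outline becomes the paper's proof.
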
 

One can interpret these results as ``explaining" why we have yet to settle on a choice of topology for $\PL(M)$.  It is suspected that several other transformation groups, such as the group of bi-Lipschitz homeomorphisms of a manifold, or diffeomorphisms of intermediate regularity on a smooth manifold, fail to admit a Polish group topology.  Progress is made in \cite{CK} for the one-dimensional case, it would be interesting to extend these results to higher dimensional manifolds as well.  
\bigskip 

\noindent \textbf{Acknowledgements.} 
The author thanks Michael Cohen, Jake Herndon, Alexander Kupers, and Christian Rosendal for their comments and interest in this project.

\section{Properties of transformation groups}  \label{properties sec}
\subsection{General results}
We begin by discussing general constraints on topologies on groups of homeomorphisms.  The broad idea is that any topology on a sufficiently rich group of homeomorphisms ``sees" to some extent the topology of $M$.  The first result along these lines is the following lemma of Kallman.  

\begin{lemma}[Kallman \cite{Kallman}]  \label{Kal lemma}
Let $M$ be a topological manifold, and let $G \subset \Homeo(M)$ have the property 
\begin{quote} $(\ast)$ for each nonempty open $U \subset M$, there exists a non-identity map $g_U \in G$ which fixes $M \setminus U$ pointwise. 
\end{quote}
If $\tau$ is any Hausdorff group topology on such a group $G$, then each set of the form
$C(U,V) := \{f \in G : f(\overline{U})\subseteq \overline{V} \}$ is closed in $(G, \tau)$. 
\end{lemma}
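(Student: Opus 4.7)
The plan is to show the complement of $C(U,V)$ is $\tau$-open; equivalently, that no net in $C(U,V)$ can $\tau$-converge to some $h \notin C(U,V)$. Suppose for contradiction that $h_\beta \to h$ with each $h_\beta \in C(U,V)$ but $h \notin C(U,V)$. Then there is some $x_0 \in \overline{U}$ with $h(x_0) \notin \overline{V}$; since $\overline{V}$ is closed and $M$ is a manifold I can choose an open neighborhood $W$ of $h(x_0)$ with $\overline{W} \cap \overline{V} = \emptyset$. The open set $h^{-1}(W)$ meets $U$, so it contains a nonempty open $U' \subset U$.

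Using $(\ast)$, pick a non-identity $g \in G$ supported in $U'$, and form the conjugates $\alpha := h g h^{-1}$ and $\alpha_\beta := h_\beta g h_\beta^{-1}$. Then $\alpha$ has support contained in $h(\overline{U'}) \subset \overline{W}$, disjoint from $\overline{V}$; and because each $h_\beta \in C(U,V)$, each $\alpha_\beta$ has support contained in $h_\beta(\overline{U'}) \subset \overline{V}$. This support dichotomy is the engine of the argument.

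Next I build a ``detector'' element. Since $\alpha \neq \id$, pick $z_0$ with $\alpha(z_0) \neq z_0$, and by continuity of $\alpha$ choose open $W_0 \ni z_0$ with $\overline{W_0} \subset W$ and $\alpha(W_0) \cap W_0 = \emptyset$. Apply $(\ast)$ once more to get a non-identity $b \in G$ supported in $W_0$. Because $\overline{W_0} \subset \overline{W}$ is disjoint from $\overline{V}$, the supports of $b$ and each $\alpha_\beta$ are disjoint, so $[b,\alpha_\beta] = \id$ for every $\beta$. On the other hand, picking $y_0 \in W_0$ with $b(y_0) \neq y_0$ and using $\alpha^{-1}(y_0) \notin W_0$, a one-line direct computation gives $[b,\alpha](y_0) = b(y_0) \neq y_0$, so $[b,\alpha] \neq \id$.

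To finish, observe that the map $k \mapsto [b, kgk^{-1}]$ is $\tau$-continuous because multiplication and inversion are, hence $[b,\alpha_\beta] \to [b,\alpha]$ in $\tau$. But the left-hand side is identically $\id$, which by Hausdorffness forces $[b,\alpha] = \id$, a contradiction. The main obstacle in this strategy is the construction of $b$: one needs simultaneously $(i)$ $b \neq \id$ (supplied by $(\ast)$), $(ii)$ the support of $b$ placed far enough inside $W$ that it commutes with every $\alpha_\beta$ (ensured by $\overline{W} \cap \overline{V} = \emptyset$), and $(iii)$ the support of $b$ arranged relative to $\alpha$ so that $[b,\alpha] \neq \id$ (this is where the displacement condition $\alpha(W_0) \cap W_0 = \emptyset$ and a second application of $(\ast)$ are needed). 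Once these three constraints are met, continuity and Hausdorffness of $\tau$ close the argument without any reference to a specific function-space topology on $G$.
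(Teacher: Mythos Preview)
Your proof is correct and is essentially the same argument as the paper's, just phrased via nets rather than as an intersection of closed sets. The paper writes $C(U,V)$ as $\bigcap_{U',W'} F_{U',W'}$ where $F_{U',W'}=\{f: [f g_{U'} f^{-1}, g_{W'}]=\id\}$ with $U'\subset U$ and $W'\subset M\setminus V$, each $F_{U',W'}$ being closed as the preimage of $\id$ under a continuous map; your choice of $g$ supported in $U'$ and $b$ supported in $W_0$ is exactly the construction of one such $F_{U',W_0}$ witnessing $h\notin C(U,V)$, and your displacement condition $\alpha(W_0)\cap W_0=\emptyset$ is precisely what verifies the nontrivial inclusion $\bigcap F_{U',W'}\subset C(U,V)$.
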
 

\noindent A proof of Lemma \ref{Kal lemma} for the case of $I = M$ is given in \cite[Lemma 2.2]{CK}, but it applies equally well in the general case.  The outline is as follows: first, one uses property $(\ast)$ to show that $C(U, V)$ is the intersection of all sets of the form 
$$F_{U', W'} : =  \{f \in \Homeo(M) : f g_{U'} f^{-1} \text{ commutes with } g_{W'} \}$$ 
where $U'$ is open in $U$ and $W'$ is open in $M \setminus V$.  Now each $F_{U', W'}$ is closed, since it is the pre-image of the identity under the (continuous) commutator map $\Homeo(M) \to \Homeo(M)$ given by $f \mapsto  [f g_{U'} f^{-1}, g_{W'}]$.  
\\

We will work with the following strengthening of condition $(\ast)$.  Note that this condition is satisfied by $\PL(M)$, as well as many other familiar transformation groups such as $\Diff_0(M)$, the group of volume-preserving diffeomorphisms of a manifold, etc. 

\begin{definition}  \label{pert def}
Say that $G \subset \Homeo(M)$ satisfies the \emph{local perturbations property} if, for each open set $U \subset M$ and point $y \in U$, the set $\{f(y) : f|_{M\setminus U} = \id \}$ is uncountable.  
\end{definition}  

The next theorem explains our choice of the name ``local perturbations property", it says that such perturbations can be found in any neighborhood of the identity (i.e. \emph{locally} in the group--theoretic sense) in $G$.  

\begin{theorem}[Local perturbations exist] \label{pert prop}
Suppose that $G \subset \Homeo(M)$ has the local perturbations property and $\tau$ is a separable, metrizable topology on $G$.   Then for any open set $U \subset M$, point $y \in U$, and open neighborhood of the identity $N$ in $(G, \tau)$, there exists $f \in N$ such that $f(y) \neq y$ and $f|_{M \setminus U} = \id$.  
\end{theorem}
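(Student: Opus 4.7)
The plan is to argue by contradiction. Set $H := \{f \in G : f|_{M \setminus U} = \id\}$, a subgroup of $G$, and suppose that every $f \in N \cap H$ fixes $y$. I will use separability of $\tau$ to convert this into a countability statement about the $H$-orbit of $y$, directly contradicting the local perturbations property.

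To set up the covering, I would first use continuity of multiplication and inversion in $(G, \tau)$ to choose a symmetric open neighborhood $V$ of the identity with $V \cdot V \subseteq N$. Since $(G, \tau)$ is separable and metrizable, hence second countable, the open cover $\{gV : g \in G\}$ admits a countable subcover $\{g_n V\}_{n \in \N}$. For each $n$ with $g_n V \cap H \neq \emptyset$ I would pick $h_n \in g_n V \cap H$; symmetry of $V$ gives $g_n \in h_n V$ and hence $g_n V \subseteq h_n V V \subseteq h_n N$, so that
$$H \;\subseteq\; \bigcup_n (h_n N \cap H) \;=\; \bigcup_n h_n(N \cap H),$$
the last equality using $h_n \in H$. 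Applying the evaluation map $\mathrm{ev}_y \colon f \mapsto f(y)$ together with the contradiction hypothesis $\mathrm{ev}_y(N \cap H) = \{y\}$, I would then conclude that
$$\{f(y) : f \in H\} \;\subseteq\; \{h_n(y) : n \in \N\}$$
is countable, contradicting the local perturbations property.

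The only point that requires any care is the initial choice of the symmetric neighborhood $V$, which is the standard construction in topological groups and relies on $\tau$ being a group topology (the natural setting of the paper). No completeness or Baire-category input enters. The conceptual heart of the argument is that in a separable topological group, the entire subgroup $H$ is absorbed by countably many left translates of any neighborhood of the identity, so a rigidity property of $N \cap H$ -- here, fixing a prescribed point $y$ -- propagates to a countability constraint on the full $H$-orbit $H \cdot y$, which the local perturbations property explicitly forbids.
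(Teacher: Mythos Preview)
Your proof is correct and follows essentially the same approach as the paper: both argue by contradiction, showing that $H$ is covered by countably many left $H$-translates of $N \cap H$, whence the hypothesis $(N \cap H)\cdot y = \{y\}$ forces the $H$-orbit of $y$ to be countable, contradicting the local perturbations property. The only cosmetic difference is that the paper obtains the countable cover directly from a countable dense subset of $H$ (using that subspaces of separable metric spaces are separable), while you pass through second countability of $G$ and a Lindel\"of-style subcover argument with the auxiliary symmetric $V$.
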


\begin{proof} 
It suffices to prove the Theorem in the case where $U$ is a small ball about $y$.  Given such $U \subset M$ and $y \in U$, let $H = \{f : f|_{M\setminus U} = \id\}$.   Since $(G, \tau)$ is separable and metrizable, the subset topology on $H$ is also separable.  

We now claim that, for any neighborhood $N$ of the identity in $G$, the neighborhood $N \cap H$ of the identity in $H$ contains some $f$ such that $f(y) \neq y$.  To see this, let $\{h_i : i \in \N \}$ be a countable dense subset of $H$. Then 
$$H = \bigcup \limits_{i=1}^\infty h_i(N \cap H).$$
If we had $h(y) = y$ for all $h \in N \cap H$, then the set of images $\{f(y) : f \in H\} = \{h_i(y)\}$ would be countable.  This contradicts the local perturbations property.  

\end{proof}

Theorem \ref{pert prop} readily generalizes to groups of homeomorphisms fixing a submanifold or, in the case where $\del M \neq 0$, those fixing the boundary of $M$.  For simplicity, we state only the boundary case.  Let $\Homeo(M, \del)$ denote the group of homeomorphisms of $M$ that fix $\del M$ pointwise.  

\begin{proposition} \label{pert prop 2}
Suppose that $G \subset \Homeo(M, \del)$ has a separable metrizable topology $\tau$, and suppose that the condition in the \emph{local perturbations property} is satisfied for every point $y$ in the interior of $M$.  Then for any closed set $X \subset M$, interior point $y \notin X$, and open neighborhood of the identity $N$ in $(G, \tau)$, there exists $f \in N$ such that $f(x) = x$ for all $x \in X$, but $f(y) \neq y$.  
\end{proposition}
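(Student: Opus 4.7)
The plan is to adapt the proof of Theorem \ref{pert prop} almost verbatim, replacing the subgroup of homeomorphisms supported in a small open set by the larger subgroup of homeomorphisms fixing $X$, but then noticing that the former still sits inside the latter so that the local perturbations property can be invoked.

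First, I would reduce to a convenient form. Since $X$ is closed, $y$ lies in the interior of $M$, and $y \notin X$, I can choose an open ball $U$ about $y$ with $\overline{U} \subset \interior(M) \setminus X$. Any homeomorphism supported in $U$ automatically fixes $X$ (and $\del M$) pointwise, so the subgroup
\[
H_U := \{f \in G : f|_{M \setminus U} = \id\}
\]
is contained in the subgroup
\[
H := \{f \in G : f|_X = \id\}.
\]
Thus it suffices to produce $f \in N \cap H$ with $f(y) \neq y$.

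Next, I would endow $H$ with the subspace topology from $(G,\tau)$. Because $(G,\tau)$ is separable and metrizable, so is $H$, so there is a countable dense set $\{h_i : i \in \N\} \subset H$. For the given neighborhood $N$ of the identity in $G$, translates by the $h_i$ cover $H$:
\[
H = \bigcup_{i=1}^\infty h_i\bigl(N \cap H\bigr).
\]
Suppose, for contradiction, that every $f \in N \cap H$ satisfies $f(y) = y$. Then the image set $\{f(y) : f \in H\}$ is contained in the countable set $\{h_i(y) : i \in \N\}$. But $H \supset H_U$, so this image set also contains $\{f(y) : f|_{M \setminus U} = \id\}$, which is uncountable by the local perturbations property (valid here since $y$ is an interior point). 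This contradiction yields the desired $f$.

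There is no real obstacle beyond verifying that the reduction to a small ball $U$ disjoint from $X$ and from $\del M$ is legitimate, which is guaranteed by the hypothesis that $X$ is closed and $y$ is an interior point. The argument is structurally identical to the proof of Theorem \ref{pert prop}; the only novelty is choosing the ambient separable subgroup to be $H = \{f : f|_X = \id\}$ rather than $H_U$, while still applying the local perturbations property through the inclusion $H_U \subset H$.
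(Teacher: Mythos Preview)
Your proof is correct and is essentially the same as the paper's, which simply states that the proof is ``exactly the same as that of Proposition~\ref{pert prop}.'' The only cosmetic difference is that you run the separability argument in the larger subgroup $H=\{f:f|_X=\id\}$ and then appeal to the inclusion $H_U\subset H$, whereas the most literal reading of the paper's proof would apply the argument directly to $H_U$; either choice works with no change in substance.
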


\noindent The proof is exactly the same as that of Proposition \ref{pert prop}.  

\subsection{Generic free subgroups}
Using Theorem \ref{pert prop}, we can now prove Theorem \ref{generic prop} on generic free subgroups.  Recall that this is the statement that, for any group $G \subset \Homeo(M)$ with the local perturbations property, and any Polish group topology $\tau$ on $G$, the $\tau$-generic pair of elements $(f, g) \in G\times G$ generate a free group.    Our proof will actually show that the generic pair generates a free group whose action on $M$ is such that some point has trivial stabilizer.

\begin{proof}[Proof of Theorem \ref{generic prop}]
Assume that $G$ has a Polish topology, so $G \times G$ is a Baire space.
For each non-trivial, reduced word $w \in F_2$, define a set 
$X_w := \{(f, g) \in G \times G : w(f,g)= \id\}$.  This is the pre-image of the singleton $\{id\}$ under the (continuous) map $G \times G \to G$ given by $(f, g) \mapsto w(f,g)$, so is closed.  We will show that $X_w$ has empty interior, in which case the generic pair $(f,g)$ does not lie in any $X_w$, and hence generates a free group.  

Assume for contradiction that some $X_w$ has nonempty interior and let $(f, g) \in \interior(X_w)$.   
Write $w(f,g) = t_k \ldots t_1$ as a reduced word, where each $t_i \in \{f^{\pm 1}, g^{\pm 1} \}$.  

Choose any point $y _0 \in M$, and let $y_i = t_i \ldots t_1(y_0)$.  Let $m$ be the minimum integer such that the points $y_0, y_1, y_2, \ldots, y_m$ are \emph{not} all distinct.  Since $w(f,g) = \id$, we have $y_k = y_0$ and so $1 \leq m \leq k$.  Let $U$ be a small neighborhood of $y_{m-1}$ disjoint from $\{y_0, y_1, \ldots, y_{m-2}\} \setminus \{y_m\}$, and such that $t_m(U)$ is also disjoint from $\{y_0, y_1, \ldots, y_{m-1}\} \setminus \{y_m\}$.  
By Theorem \ref{pert prop}, for any neighborhood $N$ of the identity in $G$, there exists $h \in N$ such that $h(y_{m-1}) \neq y_{m-1}$ and $h$ restricts to the identity on the complement of $U$.  
Modify $t_{m}$ (which is either $f$, $g$, $f^{-1}$, or $g^{-1}$) by replacing it with $t_{m}\circ h$, and leaving the other free generator unchanged.  This gives a new pair $(f', g')$ that still lies in the interior of $X_w$, provided $N$ was chosen small enough.

We claim that, after this modification, the images of $y$ under the first $m$ initial strings of $w(f', g')$ -- adapting the previous notation, these are the points $t_i' \ldots t_1'(y)$, for $0 \leq i \leq m$, where $t_i' \in  \{(f')^{\pm 1}, (g')^{\pm 1} \}$ -- are now all distinct.  In fact, we will have $t_i' \ldots t_1'(y) = t_i \ldots t_1(y)$ for each $i < m-1$.  To see this, note that for each generator $t_i$, we have $t_i(y_{i-1}) = y_i$, except in the (intended) case $i=m$, or possibly if $t_{m-1} = t_m^{-1}$, in which case $t'_{m-1} = h^{-1}t_{m-1}$ and we would have $t'_{m-1}(y_{m-2}) =  h^{-1}(y_{m-1})$.  But this case is excluded by requiring that $w$ be a reduced word.
As $t'_m(y_{m-1}) \neq y_m$, and $t'_m(y_{m-1}) \in t_m(U)$, this shows that the points $t_i' \ldots t_1'(y)$ are all distinct.  

If $w(f', g') \neq \id$, we are already done.   Otherwise, we may repeat the procedure, again perturbing a generator in the neighborhood of the first repeated point in the sequence of images of $y$ under initial subwords of $w(f', g')$.   The process terminates when we arrive at some pair $(f^{(k)}, g^{(k)})$ in the interior of $X_w$ satisfying either $w(f^{(k)}, g^{(k)}) \neq \id$ or the more specific condition $w(f^{(k)}, g^{(k)})(y) \neq y$.  This contradicts the definition of $X_w$.  

\end{proof}

\begin{remark} Our proof borrowed notation (and some inspiration) from Ghys' proof that the generic pair of homeomorphisms of the circle, with respect to the standard $C^0$ topology, generate a free group \cite[Prop 4.5]{Ghys Ens}.    
The difference here is that we know much less about the topology on $G$ than we do about the $C^0$ topology on $\Homeo(S^1)$.  In particular, we don't even know whether the evaluation maps $G \times M \to M$ given by $(g, x) \mapsto g(x)$ are continuous -- a fact used directly in \cite{Ghys Ens}.  
\end{remark}


\begin{remark}[Relative case of Theorem \ref{generic prop}] \label{relative rk}
Using Proposition \ref{pert prop 2} in place of Theorem \ref{pert prop}, the proof above shows that whenever  $G \subset \Homeo(M, \del)$ has a Polish group topology and satisfies the local perturbations property, then the generic pair of elements of $G$ generate a free group.  
\end{remark}

\section{Free groups in $\PL(I^n)$}  \label{free sec}

We use the following result of Brin and Squier.  

\begin{lemma}[\cite{BS}] \label{BS lem}
$\PL(I)$ contains no free subgroup.  More specifically, the subgroup generated by any two elements $f, g \in \PL(I)$ is either abelian or contains a copy of $\Z^\infty$.  
\end{lemma}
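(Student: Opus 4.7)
The plan is to prove the stated dichotomy: assuming $H := \langle f, g \rangle \subseteq \PL(I)$ is not abelian, I would exhibit a subgroup of $H$ isomorphic to $\Z^\infty$. The absence of non-abelian free subgroups in $\PL(I)$ follows at once, since a non-abelian free group contains no copy of $\Z^2$, much less $\Z^\infty$.

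The main tool is the orbital structure of PL homeomorphisms. For $\varphi \in \PL(I)$, the finiteness of the linearity partition forces $\supp(\varphi)$ to be a finite disjoint union of open intervals $J_1, \ldots, J_r$ (the orbitals of $\varphi$), and on each $J_i$ the intermediate value theorem ensures $\varphi$ moves every point monotonically toward a single endpoint. From this I would establish the following key fact: if $h, k \in H$ are nontrivial and $\overline{\supp(h)}$ is compactly contained in a single orbital $J$ of $k$, then the iterates $k^n(\overline{\supp(h)})$ march toward one endpoint of $J$, so after replacing $k$ by a sufficiently high power they are pairwise disjoint. Consequently the conjugates $h_n := k^n h k^{-n}$ have pairwise disjoint supports. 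Elements of $\Homeo(I)$ with disjoint supports commute; moreover, since $\PL(I)$ is torsion-free and any relation $\prod h_{n_i}^{a_i} = \id$ evaluated at a point of $\supp(h_{n_j}) \setminus \bigcup_{i \neq j} \supp(h_{n_i})$ collapses to the local equation $h_{n_j}^{a_j} = \id$, no nontrivial relation can hold, so the $h_n$ generate a subgroup of $H$ isomorphic to $\Z^\infty$.

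Everything thus reduces to producing a nontrivial $h \in H$ whose support is compactly contained in a single orbital of some $k \in H$; this is the technical heart of the proof and the main obstacle. The strategy is a support-shrinking induction, starting from the commutator $c_0 := [f, g] \neq \id$. If $\supp(c_0)$ is not yet of the desired form, I would conjugate $c_0$ by a high power of some $k \in \{f, g\}$ to obtain $c_0' := k^n c_0 k^{-n}$ whose support is shifted along the orbitals of $k$; the new element $c_1 := [c_0, c_0']$ is then supported only where $c_0$ and $c_0'$ interact, which by the orbital dynamics lies strictly inside a proper subregion of $\supp(c_0) \cup \supp(c_0')$. The delicate point is ensuring nontriviality is preserved at each shrinking step, and that the process terminates in the desired configuration rather than degenerating to the identity.

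The finiteness of the orbital decomposition guarantees only finitely many reductions are possible, so the main work goes into a careful case analysis matching the orbital structures of $f$ and $g$ and showing that the non-abelian hypothesis forces at least one such reduction to yield a nontrivial element well-situated inside an orbital of another element of $H$. Once such an $(h, k)$ is produced, the $\Z^\infty$ construction described above completes the proof.
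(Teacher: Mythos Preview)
Your endgame matches the paper's: once you have a nontrivial $h \in H$ and some $k \in H$ (or a sequence $h_n \in H$) such that the conjugates have pairwise disjoint supports, the $\Z^\infty$ follows for exactly the reasons you give. The divergence, and the gap, is in how you propose to arrive at such an $h$ and $k$.

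You aim to produce a nontrivial $h$ whose support sits compactly inside a \emph{single orbital of a single element} $k$, via an iterative commutator-shrinking process starting from $[f,g]$. You correctly flag that the delicate point is preventing the iterated commutators from collapsing to the identity, but you do not supply any mechanism to prevent this, only a promised ``careful case analysis.'' That is the missing idea: as stated, each step $c_{i+1} = [c_i, k^n c_i k^{-n}]$ may well be trivial, and nothing in your setup (no invariant, no well-ordering) forces the process to terminate at a nontrivial element in the desired configuration. Moreover, restricting the displacing element $k$ to lie in $\{f,g\}$ is too rigid: a connected component of the region where $\langle f,g\rangle$ acts without common fixed points need not be an orbital of either $f$ or $g$ alone, so a single generator may be unable to push $\supp(h)$ off itself.

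The paper sidesteps both issues with a minimality argument rather than an iteration. One first observes (using that the one-sided derivatives of $[f,g]$ at any common fixed point are $1$) that $\supp([f,g])$ is contained in the finitely many components of $I \setminus (\fix(f)\cap\fix(g))$. Among all nontrivial $w \in H$ with support in this set, choose one whose support meets the \emph{fewest} components. On any one such component $A$, the supremum of the $H$-orbit of the left endpoint of $\supp(w)\cap A$ is an $H$-fixed point, hence some $h \in H$ (not necessarily $f$ or $g$) pushes $\supp(w)\cap A$ entirely to the right of itself. Then $[hwh^{-1},w]$ is identity on $A$, and by minimality it is identity everywhere; so $w$ and $hwh^{-1}$ already commute, and iterating the displacement gives $\Z^\infty$. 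The minimality device is exactly what replaces your unresolved case analysis.
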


The proof is not hard, for completeness we give a sketch here.   Recall the standard notation $\supp(f)$ for the \emph{support of f}, the closure of the set $\{x \in M : f(x) \neq x\}$.  

\begin{proof}
Let $f, g \in \PL(I)$. If a point $x$ is fixed by both $f$ and $g$, then the derivative of the commutator $[f,g]$ at $x$ is 1, and it follows that $[f,g]$ is the identity on a neighborhood of $x$.  This shows that $\supp([f,g])$ is contained in $I \setminus (\fix(f) \cap \fix(g))$.  
Assuming that the subgroup generated by $f$ and $g$ is not abelian, let $W$ then be the (nonempty) subgroup consisting of homeomorphisms $w$ such that $\supp(w)$ is nonempty and contained in $I \setminus (\fix(f) \cap \fix(g))$.    

Choose some $w \in W$ such that $\supp(w)$ meets a \emph{minimum} number of connected components of $I \setminus (\fix(f) \cap \fix(g))$.   Let $A$ be a connected component of $I \setminus (\fix(f) \cap \fix(g))$ that meets $\supp(w)$, this is a closed interval.  Let $a$ and $b$ denote $\min \{\supp(w) \cap A\}$ and $\max \{\supp(w) \cap A\}$ respectively. 

As $\sup \{h(a) : h \in \langle f, g \rangle \}$ is fixed by $\langle f, g \rangle$, there exists some $h$ in $\langle f, g \rangle$ with $h(a) > b$.  It follows that $h w h^{-1}$ and $w$ have disjoint support on $A$, so $[hwh^{-1}, w]$ restricts to the identity here.  Since $\supp(w)$ was assumed to meet a minimum number of connected components of $I \setminus (\fix(f) \cap \fix(g))$, we must have $\supp([hwh^{-1}, w]) = \emptyset$, i.e. $hwh^{-1}$ and $w$ commute.   This process can be repeated iteratively, finding $h_n$ that displaces the support of $w$ off of $\bigcup_{i< n} \supp(h_i w h_i ^{-1}) \cap A$, giving a copy of $\Z^\infty$ in $\langle f, g\rangle$.  
\end{proof} 

By contrast, as soon as $\dim(M) = n \geq 2$, the groups $\PL(M)$, and $\PL(M, \del)$ contain many free subgroups.   A number of examples are given in \cite{CR}, the easiest ones are the following. 

\begin{example}
Let $n \geq 2$, and consider a free subgroup of $\GL_n(\R)$ freely generated by $\alpha$ and $\beta$.   For any $p \in M$, there exist $\PL$ homeomorphisms $f$ and $g$, fixing $p$, supported on a neighborhood of $p$, and with derivatives $Df(p) = \alpha$ and $Dg(p)=\beta$.  Taking the derivative at $p$ defines an injective homomorphism from the group generated by $f$ and $g$ to a free subgroup of $\GL_n(\R)$, hence $f$ and $g$ satisfy no relation.  
\end{example}

Our next goal is to show that we can exclude these free subgroups by restricting our attention to homeomorphisms that preserve a $1$-dimensional foliation.  

\begin{definition}
Let $\PL(I^n, \mathcal{F})$ denote the subgroup of $\PL(I^n)$ consisting of homeomorphisms that preserve each leaf of the foliation of $I^n$ by vertical lines $\{x\} \times I$, where $x \in I^{n-1}$.  
\end{definition}

\begin{proposition}  \label{no free prop} 
$\PL(I^n, \mathcal{F})$ does not contain a free subgroup.  
\end{proposition}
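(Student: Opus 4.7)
The plan is to adapt Brin--Squier (Lemma~\ref{BS lem}) via the leaf-preserving structure of $\PL(I^n,\mathcal{F})$, combined with a Baire category argument on the parameter space $I^{n-1}$ of leaves.

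First I would set up notation. Every $f \in \PL(I^n,\mathcal{F})$ has the form $f(x,t) = (x, f_x(t))$ with $f_x \in \PL(I)$, and the restriction $r_x: f \mapsto f_x$ is a group homomorphism $\PL(I^n,\mathcal{F}) \to \PL(I)$; moreover $x \mapsto f_x$ is continuous into $\PL(I)$ (with the $C^0$ topology), since $(x,t) \mapsto f_x(t)$ is PL and hence continuous on $I^n$.

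Suppose for contradiction that $\langle f,g \rangle \subset \PL(I^n,\mathcal{F})$ is free of rank $2$, and identify it with $F_2$. By Lemma~\ref{BS lem} applied to $\langle f_x,g_x\rangle \subset \PL(I)$ for each $x$, this 2-generated subgroup is not free of rank $2$; hence the kernel $K_x := \{w \in F_2 : w(f_x,g_x) = \id\}$ is a nontrivial normal subgroup of $F_2$ for every $x \in I^{n-1}$. For each nontrivial $w \in F_2$, the set $Z_w := \{x : w \in K_x\}$ is closed in $I^{n-1}$ (preimage of $\id \in \PL(I)$ under the continuous map $x \mapsto w(f_x,g_x)$), and the countable family $\{Z_w\}_{w \in F_2 \setminus \{e\}}$ covers $I^{n-1}$. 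Since $I^{n-1}$ is a Baire space, some $Z_{w_0}$ has nonempty interior; choosing open $U \subset Z_{w_0}$, I obtain a nontrivial element $h := w_0(f,g) \in \langle f,g\rangle$ that restricts to the identity on $U \times I$.

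The final step is to promote this local identity to a global relation. The natural plan is to iterate the Baire argument on $I^{n-1} \setminus Z_{w_0}$, itself a Baire space as an open subset, producing a (possibly transfinite) family of relations trivializing on pairwise disjoint open subsets whose union is dense, and then to combine them into a single global relation. A natural organization is a double induction: on the dimension $n$ (with base case $n=1$ being Lemma~\ref{BS lem}) and on the PL-combinatorial complexity of the base polyhedron. If $U$ is chosen to lie in a single top-dimensional simplex, then the normal closure $\langle\langle h\rangle\rangle \cong F_\infty \subset \langle f,g \rangle$ provides a copy of $F_2$ acting by elements of $\PL(\overline{I^{n-1}\setminus U}\times I,\mathcal{F})$, a group of strictly smaller combinatorial complexity, to which the (suitably generalized) inductive hypothesis applies.

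The main obstacle is this last step. PL rigidity alone does \emph{not} extend ``identity on $U \times I$'' to all of $I^n$: one can easily construct $h \in \PL(I^n, \mathcal{F})$ that is the identity on the half-cube $[0,\tfrac12]\times I^{n-2}\times I$ but nontrivial on the other half. So promoting a single Baire relation to a global contradiction cannot be done locally; it genuinely requires the combined use of the free group structure of $\langle f,g\rangle$ and the PL-continuity of the family $x \mapsto (f_x, g_x)$, and organizing this combination cleanly (most likely through the inductive scheme above) is the delicate part of the argument.
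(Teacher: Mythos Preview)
Your Baire argument correctly produces one nontrivial word $w_0$ vanishing on one open tube $U_0\times I$, but as you yourself flag, the combination step is the crux, and your proposed transfinite/inductive scheme is not a proof. The paper closes exactly this gap by a different route that you are missing two ingredients of.

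First, instead of Baire, the paper uses the PL structure to show that for \emph{every} leaf $L=\{x\}\times I$ there is a nontrivial word trivial on an open neighborhood $U_x\times I$ of $L$. The point is purely local-linear: if $u(f,g)\in\PL(I^n,\mathcal{F})$ restricts to the identity on $L$, then near $L$ it is locally linear with matrix lying in the abelian group of unipotent upper-triangular matrices (identity on the first $n-1$ basis vectors, arbitrary last column above the diagonal). Hence for any two words $u,v\in K_x$ the commutator $[u,v](f,g)$ is the identity on a full neighborhood of $L$. Since $K_x$ is a nontrivial normal subgroup of $F_2$ it is non-abelian, so one may choose $u,v\in K_x$ with $[u,v]\neq e$ in $F_2$. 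This gives an honest open cover of $I^{n-1}$, not merely a Baire-generic piece.

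Second, the combination step is then a one-line commutator trick rather than an induction. By compactness take a finite subcover $\{U_1,\ldots,U_m\}$ of minimal cardinality, with nontrivial words $w_i$ trivial on $U_i\times I$. If $m\geq 2$, pick $U_i\cap U_j\neq\emptyset$. Either $[w_i,w_j]\neq e$ in $F_2$, in which case $[w_i,w_j]$ is a nontrivial word trivial on $(U_i\cup U_j)\times I$; or $[w_i,w_j]=e$ in $F_2$, in which case $w_i,w_j$ are powers of a common $w'$, and torsion-freeness of $\PL(I)$ forces $w'(f,g)$ itself to vanish on both $U_i\times I$ and $U_j\times I$. Either way $\{U_i,U_j\}$ can be replaced by $\{U_i\cup U_j\}$, contradicting minimality. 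Hence $m=1$ and $\langle f,g\rangle$ satisfies a global relation.

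So the concrete gap in your proposal is that Baire alone gives too little: it does not produce a cover, and without a cover there is no finite combinatorial object on which to run a merging argument. The PL local-linearity observation is precisely what upgrades ``some tube'' to ``every tube,'' after which compactness and the commutator merge replace your open-ended induction with a two-line finish.
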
 

\begin{proof}
Let $f, g \in \PL(I^n, \mathcal{F})$.  The restrictions of $f$ and $g$ to any vertical line $L = \{x\} \times I$ are piecewise linear homeomorphisms of $L$.  

By Lemma \ref{BS lem}, the restriction of $f$ and $g$ to $L$ generate a group that is either abelian or contains a copy of $\Z^\infty$, in particular, there are nontrivial words $u$ and $v$ in $\langle f, g \rangle$ such that $u$ and $v$ restrict to the identity on $L$ and such that $[u,v]$ is not the trivial word.   
Notice that if $u \in \PL(I^n, \mathcal{F})$ restricts to the identity on $L$, then after identifying $L$ with the $n^{th}$ coordinate axis, $u$ is locally linear of the form 
$$\begin{pmatrix}
1 & 0 & \ldots & 0 & a_1 \\
0 & 1 & \ldots & 0 & a_2 \\
\vdots \\
0 & 0 & \ldots & 1 & a_{n-1}\\
0 & 0 & \ldots & 0 & 1 
\end{pmatrix}
$$
and the collection of all such linear maps forms an abelian group.   In particular, the commutator $[u,v]$ agrees with the identity on a \emph{neighborhood} of $L$.  
Shrinking this neighborhood if needed, we may take it to be of the form $U \times I$, where $U$ is a neighborhood of $x$ in $I^{n-1}$.

Now consider the collection of all open sets of the form $U' \times I$, such that
\begin{itemize}
\item $U'$ is open in $I^{n-1}$ and 
\item There exists some nontrivial reduced word $w$ in $f$ and $g$ with $w|_{U' \times I} = \id$.  
\end{itemize} 
The argument given above shows that this collection of sets forms an open cover of $I^n$.  Let $\{ U_1 \times I$, \ldots, $U_m \times I \}$ be a finite subcover of \emph{minimal} cardinality, and for each $1 \leq i \leq m$ let $w_i$ be a nontrivial word that restricts to the identity on $U_i \times I$.  We claim that $m = 1$, and therefore $f$ and $g$ satisfy a nontrivial relation.  

To see that $m=1$, assume for contradiction that we have more than one set in the cover and choose $i$ and $j$ such that $U_i \cap U_j \neq \emptyset$.  If $[w_i, w_j]$ reduces to the trivial word, then $w_i$ and $w_j$ would both be powers of some word $w'$.  Since $\PL(I)$ is torsion-free, this implies that $w'$ restricts to the identity on both $U_i$ and $U_j$, so we could replace our cover with a smaller one, using the single set $(U_i \cup U_j) \times I$ on which $w'$ is identity.   Thus, the assumption of minimal cardinality of the cover implies that $[w_i, w_j]$ is a nontrivial word.  However, since $w_i$ pointwise fixes $U_i \times I$ and $w_j$ pointwise fixes $U_j \times I$, the commutator $[w_i, w_j]$ restricts to the identity on $(U_i \cup U_j) \times I$, and this again contradicts our choice of a minimal cover.  

\end{proof}

\section{Completing the proof}

We now put together our previous work to finish the proof of Theorem \ref{main thm}, starting with the special case of $M = I^n$.  Suppose for contradiction that $\PL(I^n)$ admits a Polish topology.   Let $\PL(I^n, \mathcal{F})$ be the subgroup of vertical line preserving homeomorphisms defined in the previous section.   We claim that $\PL(I^n, \mathcal{F})$ is a closed subgroup, and hence Polish.  This is a consequence of the following general lemma.  

\begin{lemma} \label{product lem}
Let $M = A \times B$ be a product manifold.  
Let $G \subset \Homeo(M)$ be a subgroup satisfying condition $(\ast)$, and $\tau$ a Hausdorff group topology on $G$.  Then 
$$G(B) := \{f \in G : f(\{a\} \times B) = \{a\} \times B \text{ for all } a \in A \}$$ 
is a closed subgroup.  
\end{lemma}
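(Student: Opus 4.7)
The plan is to realize $G(B)$ as an intersection of closed sets of the form $C(U',V')$ supplied by Kallman's Lemma \ref{Kal lemma}. For each open set $U \subset A$, the product $U \times B$ is open in $M$, and $\overline{U \times B} = \overline{U} \times B$ since $B$ is closed in itself. Thus Lemma \ref{Kal lemma} (applied to $G$, which satisfies $(\ast)$ by hypothesis) shows that
$$C_U := C(U \times B,\, U \times B) = \{f \in G : f(\overline{U} \times B) \subseteq \overline{U} \times B\}$$
is closed in $(G, \tau)$.

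I claim that $G(B) = \bigcap_{U} C_U$, where the intersection ranges over all open $U \subset A$. For the forward containment, if $f \in G(B)$ then $f$ preserves each vertical fiber setwise, so
$$f(\overline{U} \times B) = \bigcup_{a \in \overline{U}} f(\{a\} \times B) = \bigcup_{a \in \overline{U}} \{a\} \times B = \overline{U} \times B,$$
giving $f \in C_U$. For the reverse, suppose $f$ lies in every $C_U$. Then for each $a \in A$ and $b \in B$, the image $f(a,b)$ lies in $\overline{U} \times B$ for every open neighborhood $U$ of $a$, so its first coordinate belongs to $\overline{U}$. Because $A$ is a manifold and hence Hausdorff, the intersection of the sets $\overline{U}$ over open neighborhoods $U$ of $a$ is exactly $\{a\}$; thus $f(a,b) \in \{a\} \times B$, i.e.\ the projection identity $\pi_A \circ f = \pi_A$ holds, where $\pi_A : M \to A$ is the first coordinate projection. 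Composing on the right with $f^{-1}$ yields $\pi_A \circ f^{-1} = \pi_A$, so $f^{-1}$ also sends fibers into themselves; the inclusions are then equalities, and $f \in G(B)$.

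Being an intersection of closed sets, $G(B)$ is closed in $(G, \tau)$, and it is clearly a subgroup. The only real point to get right is choosing the family of Kallman-closed sets: taking $U' = V' = U \times B$ for arbitrary open $U \subset A$ is what lets Hausdorff separation of points in $A$ certify exact fiber preservation. No further obstacle arises.
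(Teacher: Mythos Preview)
Your proof is correct and follows essentially the same route as the paper: both realize $G(B)$ as the intersection $\bigcap_{U \subset A \text{ open}} C(U\times B,\, U\times B)$ and invoke Lemma~\ref{Kal lemma}. If anything, your version is slightly more careful than the paper's, which asserts the equality $f(\{a\}\times B)=\{a\}\times B$ directly, whereas you explicitly derive it from $\pi_A\circ f=\pi_A$ by composing with $f^{-1}$.
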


\begin{proof}
We show that $G(B)$ is an intersection of sets of the form $C(U, V)$, hence is closed by Lemma \ref{Kal lemma}.  
Consider the collection $\Lambda$ of sets of the form $\{U'\} \times B$, where $U'$ is open in $A$.  Then 
$$G(B) = \bigcap_{U \in \Lambda} C(U, U).$$ 
Indeed, if $f \in \Homeo(M)$ satisfies $f(\{U'\} \times B) \subset \{U'\} \times B$ for each $U'$ in a neighborhood basis of $a \in A$, then $f(\{a\} \times B) = \{a\} \times B$.  This gives the inclusion of $\bigcap_{U \in \Lambda} C(U, U)$ in $G(B)$, and the reverse inclusion is immediate.  
\end{proof}

To continue the proof of the Theorem, note that $\PL(I^n, \mathcal{F})$ also satisfies the local perturbations property -- for example, given $y \in M$ and any neighborhood $U$ of $y$, one can define for each $t \in (0, \epsilon)$ an element of $\PL(I^n, \mathcal{F})$ supported on $U$, and agreeing with $(x_1, ..., x_{n-1}, x_n) \mapsto (x_1, ... , x_{n-1}, x_n +t)$ on a small linear simplex containing $y$.  
Thus, by Theorem  \ref{generic prop} the generic $f, g \in \PL(I^n, \mathcal{F})$ generate a free subgroup.  This contradicts Proposition \ref{no free prop}, so we conclude that $\PL(I^n)$ has no Polish topology.  
\bigskip

This strategy also works to show that the group $\PL(I^n, \del)$ of piecewise linear homeomorphisms of $I^n$ that pointwise fix the boundary admits no Polish group topology.  The proof of Lemma \ref{product lem} shows that the subgroup of homeomorphisms in $\PL(I^n, \del)$ that preserve each vertical line is closed, hence Polish.  It also satisfies the (relative) local perturbations property.  Now Remark \ref{relative rk} implies that the generic pair of elements generate a free group, which is again a contradiction.  

For the general case, let $M$ be an $n$-dimensional $\PL$ manifold, and assume again for contradiction that $\PL(M)$ has a Polish group topology.  Let $A \subset M$ be a linearly embedded copy of $I^n$, and let $G \subset \PL(M)$ be the subgroup of homeomorphisms that restrict to the identity on $M \setminus A$.   We claim that  
$$G = \bigcap \limits_{U' \subset{M \setminus \bar{A}} \text{ open}} C(U', U'),$$
and hence $G$ is a closed subgroup.  That $G \subset C(U', U')$ for any $U' \subset M \setminus \bar{A}$ is immediate, to see the reverse inclusion, take any point $x \in M \setminus \bar{A}$. If $f(U') \subset U'$ for all sets $U'$ in a neighborhood basis of $x$, then $f(x) = x$.  

Since $G$ is closed, it is also a Polish group.  As $G \cong \PL(I^n, \del)$, this contradicts the case proved above, and completes the proof of Theorem \ref{main thm}. 

 \qed

\subsection{Piecewise projective homeomorphisms}  \label{cor sec}

A homeomorphism $f$ of $S^1$ is \emph{piecewise projective} if there is a partition of $S^1$ into finitely many intervals such that the restriction of $f$ to each interval agrees with the standard action of $\PSL(2,\R)$ by projective transformations on $\R P ^1 = S^1$.  
Much like $\PL(M)$, this group has a rich algebraic structure: among its subgroups are counterexamples to the Von Neumann conjecture (see \cite{Monod}), and the full group is closely related to a proposed ``Lie algebra" for the group $\Homeo(S^1)$ given in \cite{MP}.  

We now prove Corollary \ref{projective cor}, the analog of Theorem \ref{main thm} for piecewise projective homeomorphisms. 
Let $G$ denote the group of all piecewise projective homeomorphisms of $S^1$, let $I \subset S^1$ be a small interval, and let $H \subset G$ be the subgroup of homeomorphisms pointwise fixing $I$.  Suppose that $G$ is given a Polish group topology.   Since we have 
$$H = \bigcap \limits_{U \text{ open},\, U \subset I} C(U, U)$$
$H$ is a closed subgroup, hence Polish.  

Note also that $H$ has the local perturbations property (in the modified sense for groups of homeomorphisms fixing a submanifold), so it follows from Theorem \ref{generic prop} that the generic pair of elements of $H$ generate a free group.  However, the same argument as in Lemma \ref{BS lem} shows that the subgroup generated by any two elements of $H$ is either metabelian or contains a copy of $\Z^\infty$; in particular, it is not free (this is also proved in Theorem 14 of \cite{Monod}).    This gives a contradiction, showing that $G$ cannot have a Polish group topology.  

\qed



\vspace{.3in}

Dept. of Mathematics 

University of California, Berkeley  

970 Evans Hall

Berkeley, CA 94720 

E-mail: kpmann@math.berkeley.edu

\end{document}